\newcommand{\tr}{
\begin{picture}(10,0)
\put(5,3){\circle{9}}
\put(5,-1.5){\line(0,1){9}}
\qbezier(2,0)(5,3)(8,6)
\qbezier(2,6)(5,3)(8,0)
\end{picture}
}
\newcommand{\x}{
\begin{picture}(10,0)
\put(5,3){\circle{9}}
\qbezier(2,0)(5,3)(8,6)
\qbezier(2,6)(5,3)(8,0)
\end{picture}
}
\newcommand{\h}{
\begin{picture}(10,0)
\put(5,3){\circle{9}}
\put(0,3){\line(1,0){9}}
\qbezier(2,0)(2,3)(2,6)
\qbezier(8,0)(8,3)(8,6)
\end{picture}
}
\theoremstyle{definition}
\newtheorem{theorem}{Theorem}[section]
\newtheorem{corollary}[theorem]{Corollary}
\newtheorem{lemma}[theorem]{Lemma}
\newtheorem{habiro}{Habiro's Theorem}
\newtheorem{formulation}{Corollary to  Habiro's Theorem}
\newtheorem{conjecture}[theorem]{Conjecture}
\newtheorem{definition}[theorem]{Definition}
\newtheorem{notation}[theorem]{Notation}
\newtheorem{remark}[theorem]{Remark}
\newtheorem{fact}[theorem]{Fact}
\author{Noboru Ito}
\address{Graduate School of Mathematical Sciences, The University of Tokyo, 3-8-1, Komaba, Meguro-ku, Tokyo, 153-8914, Japan}
\address{Current address: National Institute of Technology, Ibaraki College, 866, Nakane, Hitachinaka, Ibaraki, 312-8508, Japan}
\email{
nito@gm.ibaraki-ct.ac.jp
}
\thanks{The part of the work of N.~Ito was supported by JSPS Japanese-German Graduate Externship, a Waseda University Grant for Special Research Projects (Project number: 2015K-342), and MEXT KAKENHI Grant Number 20K03604.  
N.~Ito was a project researcher of Grant-in-Aid for Scientific Research (S) 24224002. }
\author{Yusuke Takimura}
\address{Gakushuin Boys' Junior High School, 1-5-1, Mejiro, Toshima-ku, Tokyo, 171-0031, Japan}
\email{Yusuke.Takimura@gakushuin.ac.jp}
\keywords{Knot projections; Reidemeister moves\\ \quad \ MSC2020: Primary: 57K10, 57K12}
\date{July 25, 2021}
\begin{document}
\noindent                                             
\begin{picture}(150,36)                               
\put(5,20){\tiny{Submitted to}}                       
\put(5,7){\textbf{Topology Proceedings}}              
\put(0,0){\framebox(140,34){}}                        
\put(2,2){\framebox(136,30){}}                        
\end{picture}                                        

\renewcommand{\bf}{\bfseries}
\renewcommand{\sc}{\scshape}
\vspace{0.5in}
\begin{abstract}
A generic immersion of a circle into a $2$-sphere is often studied as a projection of a knot; it is called a knot projection.   
A chord diagram is a configuration of paired points on a circle; traditionally, the two points of each pair are connected by a chord.  A triple chord is a chord diagram consisting of three chords, each of which intersects the other chords.   
Every knot projection obtains a chord diagram in which every pair of points corresponds to the inverse image of a double point.   
In this paper, we show that  for any knot projection $P$, if its chord diagram contains no triple chord, then there exists a finite sequence from $P$ to a simple closed curve such that the sequence consists of flat Reidemeister moves, each of which decreases $1$-gons or strong $2$-gons, where a strong $2$-gon is a $2$-gon oriented by an orientation of $P$.  
\end{abstract}
\title[Nontrivial knot projection with no triple chords]{Any nontrivial knot projection with no triple chords has a monogon or a bigon}
\maketitle
\section{\bf Introduction}
V.~I.~Arnold \cite{Arnold1994} (V.~A.~Vassiliev \cite{Vassiliev1990},~resp.) introduces a theory classifying plane curves (knots,~resp.) in vector spaces generated by immersions $S^1 \to \mathbb{R}^2$ ($\mathbb{R}^3$,~resp.) divided by subspaces, called discriminants, each of which consists of curves (knots,~resp.) with singularities.  For the knot case, it is well known that every coefficient in the Taylor expansion $t=e^x$ of the Jones polynomial is a Vassiliev invariant \cite{BirmanLin1993}.     
For plane curves, Arnold \cite[Page~16, Remark]{Arnold1994} remarked: 

\begin{quote}
{One may ask whether the series
\begin{equation}\label{ArnoldEq}
0 = J^+ + 2 St - E_1 + E_2 \cdots
\end{equation}
can be continued by adding natural invariants $E_i$ vanishing on increasing sets of ``simplest'' curves.
} 
\end{quote}


Here, $J^+ + 2 St$ is known as the Arnold invariant of spherical curves (including plane curves) and $\frac{1}{8} (J^+ + 2 St)$ is the average of the second coefficients of the Conway polynomials of the possible $2^n$ knots by over/under informations for spherical curves, i.e. knot projections.  
Arnold was interested in the filtered spaces that vanish on filtered (Vassiliev-type) invariants for plane curves \cite[Page~16, Remark]{Arnold1994}.  More precisely, we formulate (\ref{ArnoldEq}) as   Conjecture~\ref{conj:FilteredPlane}.  
\begin{conjecture}[Arnold \cite{Arnold1994}]\label{conj:FilteredPlane}
{\it
Let $J^+ + 2 St$ be as above.    
There exists a sequence of invariants $\{ E_i \}_{i \ge 0}$ with $E_0$ $=$ $J^+ +$ $2 St$ such that  $C$ is an immersion $S^1 \to S^{2}$ and 
\begin{equation}\label{1_eq1}
\{C~|~ E_0 (C)=0 \} \supset \{C~|~ E_1 (C)=0 \} \supset \{C~|~ E_2 (C)=0 \} \supset \cdots
\end{equation} 
}
\end{conjecture}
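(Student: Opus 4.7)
The plan is to construct the invariants $E_i$ inductively by tying each one to a forbidden combinatorial pattern in the chord diagram. At each level $i$ I would choose a structural condition $\mathcal{P}_i$ on chord diagrams and define $E_i$ so that it vanishes whenever the chord diagram of $C$ satisfies $\mathcal{P}_i$. The natural choice of $\mathcal{P}_1$ is ``contains no triple chord'', which matches the hypothesis of the main theorem announced in the abstract; for $i \ge 2$, $\mathcal{P}_i$ would forbid an $(i+2)$-tuple of pairwise intersecting chords.

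For the base case, take $E_0 = J^+ + 2St$. To build $E_1$, I would start from a signed count of triple chords in the chord diagram and add lower-order correction terms so as to obtain a full regular-homotopy invariant of spherical curves, that is, an invariant under all three flat Reidemeister moves, and crucially under the triangle move which can create or destroy triple chords. By construction $E_1 \equiv 0$ on projections with no triple chord. The inclusion $\{E_1 = 0\} \subset \{E_0 = 0\}$ would then follow from the paper's main theorem: any projection with no triple chord reduces, via flat Reidemeister moves each decreasing $1$-gons or strong $2$-gons, to the simple closed curve, which has $E_0 = 0$; since $E_0$ is itself a regular-homotopy invariant, this forces $E_0(C) = 0$.

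For $i \ge 2$, I would attempt the analogous construction: begin with a signed count of $(i+2)$-tuples of pairwise intersecting chords, then add Vassiliev-type lower-order corrections to obtain a full regular-homotopy invariant $E_i$. The inclusion $\{E_i = 0\} \subset \{E_{i-1} = 0\}$ would then be a consequence of a higher-level analog of the present paper's reduction theorem, asserting that a chord diagram avoiding $(i+2)$-tuples of pairwise intersecting chords admits a controlled reduction, via decreasing flat Reidemeister moves, to a projection whose $E_{i-1}$ also vanishes.

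The main obstacle is proving this higher-level reduction theorem. Already at $i = 1$ the ``no triple chord'' case appears to demand delicate combinatorial control over the interaction between $1$-gons, strong $2$-gons, and triple chords; forbidding the richer intersection patterns at $i \ge 2$ introduces strictly more configurations, and it is not at all clear that a monotone reduction sequence must exist in general. A secondary difficulty is the definition of $E_i$ itself: the Vassiliev-type correction terms needed to upgrade the signed $(i+2)$-tuple count into a regular-homotopy invariant may fail to exist, and deciding their existence (or finding a replacement for the raw count) is a nontrivial algebraic problem whose $i=1$ instance is essentially the construction of $J^+ + 2St$ itself.
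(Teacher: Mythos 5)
The statement you are trying to prove is Conjecture~\ref{conj:FilteredPlane}, which the paper records as an \emph{open problem} attributed to Arnold and does not prove; the paper explicitly notes that even the first set $\{C \mid (J^+ + 2St)(C) = 0\}$ is still unknown. What the paper actually establishes (Theorem~\ref{main_thm} and Corollary~\ref{main1}) is a chain of nested \emph{sets} of knot projections, $\{(J^+ + 2St)(P)=0\} \supset \mathcal{S} \supset \{\tr(P)=0\} \supset \{\x(P)=0\}$, defined by combinatorial conditions on chord diagrams rather than by vanishing of a sequence of invariants $E_i$. So there is no proof in the paper for you to match, and your proposal should be judged as an attack on an open conjecture.

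As such an attack it has a genuine logical gap beyond the difficulties you already acknowledge. You define $E_i$ so that it \emph{vanishes whenever} the chord diagram avoids the forbidden pattern $\mathcal{P}_i$; that gives the inclusion $\{\mathcal{P}_i \text{ holds}\} \subset \{E_i = 0\}$, i.e. the forbidden-pattern set sits \emph{inside} the zero set of $E_i$. But the conjecture requires $\{E_i = 0\} \subset \{E_{i-1} = 0\}$, so you must control \emph{every} curve on which $E_i$ vanishes, not only those avoiding the pattern. Once you add the correction terms needed to make $E_1$ a regular-homotopy invariant, $E_1(C)=0$ no longer implies that $CD_C$ has no triple chord, and your appeal to Theorem~\ref{main_thm} (which only applies to triple-chord-free projections) covers only a proper subset of $\{E_1=0\}$. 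The two difficulties you name --- existence of the Vassiliev-type corrections and the higher-level reduction theorems --- are real, but even granting both, the argument as structured proves the inclusions in the wrong direction. The paper's Corollary~\ref{main1} sidesteps this precisely by working with the sets themselves instead of invariants.
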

For knots and Vassiliev theory, 
(\ref{ArnoldEq}), called the {\it Arnold-Vassiliev approach} \cite{ParasolovSossinsky1997}, 
is very successful because Kohno \cite{Kohno1993} showed that Vassiliev invariants classify pure braids, and    
the new notion $C_k$-move to approach a knot classification was introduced by Habiro \cite{Habiro2000} (Goussarov \cite{Goussarov1999, Gusarov2000} gave similar results independently).  For the details and the definition of $C_k$-move, see \cite{Habiro2000}.    
\begin{habiro}[Habiro \cite{Habiro2000}]
{\it
Let $K$ be a knot.  The formula  
$v_m (K) = 0$ holds for any Vassiliev invariant $v_m$ $(m \le n)$ if and only if there exists a sequence of $C_{n+1}$-moves from $K$ to the unknot.    
} 
\end{habiro}     
It is known that $C_{m}$-moves generate $C_{m+1}$-moves \cite{Habiro2000} and $v_1 (K)=0$ for every $K$; Habiro's theorem implies a classification via  the sequence $\{K~|~ v_m (K)=0 \}$ ($m \ge 1$).   
\begin{formulation}
{\it 
Let $\mathcal{K}$ be the set of knots.  
There exists a sequence of Vassiliev knot  invariants $\{ v_i \}_{i \ge 1}$ such that  
\[ \mathcal{K} = 
\{K~|~ v_1 (K)=0 \} \supset \{ K~|~ v_2 (K)=0 \} \supset \{K~|~ v_3 (K)=0 \} \supset \cdots
\]
}
\end{formulation}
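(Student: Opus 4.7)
The plan is to read the sequence $\{v_i\}$ directly off the filtration by order of the algebra of Vassiliev invariants, bundling the invariants of each level into a single rational-valued function via a sum of squares. The two inputs are Habiro's Theorem and the fact, quoted in the text, that every $C_{i+1}$-move decomposes into $C_i$-moves.

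For each $i\geq 1$, let $W_i$ denote the $\mathbb{Q}$-vector space of rational-valued Vassiliev invariants of order $\leq i$ that vanish on the unknot; it is finite-dimensional. Fix a basis $w_{1}^{(i)},\dots,w_{r_i}^{(i)}$ of $W_i$ and set
\[
v_i \;:=\; \sum_{j=1}^{r_i}\bigl(w_j^{(i)}\bigr)^{2}.
\]
The $\mathbb{Q}$-algebra of Vassiliev invariants is closed under sums and products (a product of invariants of orders $a$ and $b$ has order at most $a+b$), so each $v_i$ is itself a rational-valued Vassiliev invariant. Being a sum of squares, $v_i(K)=0$ if and only if $w_j^{(i)}(K)=0$ for every $j$, which by linearity is the condition that \emph{every} order-$\leq i$ Vassiliev invariant vanishes on $K$. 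Habiro's Theorem then identifies
\[
\{K\in\mathcal{K}\,:\,v_i(K)=0\}\;=\;\{K\,:\,K\text{ is }C_{i+1}\text{-equivalent to the unknot}\}.
\]
For $i=1$ there are no nonconstant Vassiliev invariants of order $\leq 1$, so $W_1=0$, $v_1\equiv 0$, and $\{v_1=0\}=\mathcal{K}$, matching the first equality in the claim.

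Because every $C_{i+2}$-move decomposes into $C_{i+1}$-moves, any $C_{i+2}$-trivial knot is $C_{i+1}$-trivial, so $\{v_{i+1}=0\}\subset \{v_i=0\}$ for all $i\geq 1$, producing the descending chain asserted in the Corollary. The main obstacle in making this chain \emph{nontrivial} is proving that each inclusion is strict; via the identification above this reduces to exhibiting, for every $i$, a knot that is $C_{i+1}$-trivial but not $C_{i+2}$-trivial. This is a standard consequence of the strictness of the order filtration on Vassiliev invariants, established by a clasper-surgery construction that produces a knot on which a chosen order-$(i+1)$ Vassiliev invariant is nonzero while every lower-order invariant vanishes. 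With strictness in place the sequence $\{v_i\}_{i\geq 1}$ realises the strictly decreasing filtration of $\mathcal{K}$ required by the Corollary.
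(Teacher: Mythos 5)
Your argument is correct and reaches the stated chain of containments, but it is more explicit than what the paper does. The paper treats this Corollary as an immediate rereading of Habiro's Theorem: it merely notes that $C_{m}$-moves generate $C_{m+1}$-moves and that $v_1(K)=0$ for every $K$, and interprets $\{K \mid v_m(K)=0\}$ as the set of knots on which all Vassiliev invariants of order $\le m$ vanish, i.e.\ as the $C_{m+1}$-equivalence class of the unknot; no single invariant per level is ever constructed. You close exactly that gap: bundling a basis of the finite-dimensional space $W_i$ of order-$\le i$ invariants normalized to vanish on the unknot into the single invariant $v_i=\sum_j \bigl(w_j^{(i)}\bigr)^2$ produces a literal sequence of Vassiliev invariants, using that a product of invariants of orders $a$ and $b$ has order at most $a+b$ and that a sum of squares of rationals vanishes only if each summand does. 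Two small remarks. First, the nesting $\{v_{i+1}=0\}\subset\{v_i=0\}$ already follows from $W_i\subseteq W_{i+1}$, so the detour through the decomposition of $C_{i+2}$-moves into $C_{i+1}$-moves is not needed for the containments; it matters only if you want to identify each set with a $C$-triviality class, and there one should note that Habiro's Theorem concerns invariants valued in arbitrary abelian groups while your $W_i$ is the $\mathbb{Q}$-valued part. Second, the closing paragraph on strictness is superfluous: the Corollary asserts only the containments (the symbol $\supset$ is used as $\supseteq$ throughout the paper), so no clasper-surgery construction is required, and it is fortunate that nothing hinges on it since that step is the one place where your argument is only sketched.
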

However, for plane/spherical curves, there have been unsolved problems even if they are fundamental in this approach which is similar to Vassiliev theory.  For example, we naturally list 
unsolved problems 
 when we compare Arnold plane curve theory with Vassiliev knot theory.      
\begin{itemize}
\item Find topological invariants ``$E_i$ ($i \in \mathbb{N}$)'' with filtered vector spaces.   
\item Find a global topological  property of this classification by filtered spaces.   
\item Find discriminants with degrees.  
\end{itemize}
Thus, as a first step, it is natural to consider sequence (\ref{1_eq1}) as in  Conjecture~\ref{conj:FilteredPlane}.      
However, even the first set $\{ C~|~ (J^+ + 2 St)(C) = 0 \}$ is still unknown.  

In this paper, in order to study Conjecture~\ref{conj:FilteredPlane},  we introduce sequence (\ref{1_eq2}) as in    Corollary~\ref{main1},   
which is given by  Theorem~\ref{main_thm}.     

\vspace{0.2in}

This paper is organized as follows.  In Section~\ref{sec2}, definitions are obtained and main results (Theorem~\ref{main_thm} and Corollary~\ref{main1}) are introduced.  In Section~\ref{sec3}, a proof of Theorem~\ref{main_thm} is obtained.      
An application of Theorem~\ref{main_thm}, including a formulation by Kouki Taniyama (personal communication, February 15, 2014) 
Corollary~\ref{cor2}, is presented in Section~\ref{sec5}.  

\section{\bf Definitions and the main result}\label{sec2}
\begin{definition}[knot projection, lune region]
A \emph{knot projection} is the image of a generic immersion of a circle into a $2$-sphere.   A self-intersection of a knot projection is a transverse double point which is simply referred to as a \emph{double point} in this paper.   
For a knot projection, if we find a disk with the boundary that consists of two double points and two arcs where some sub-curves may pass through the disk, then we call such a disk a \emph{lune region} (Figure~\ref{21}).  
In this paper, the knot projection that has no double points is called the \emph{simple closed curve} and is denoted by $U$.  
\begin{figure}[h!]
\includegraphics[width=3cm]{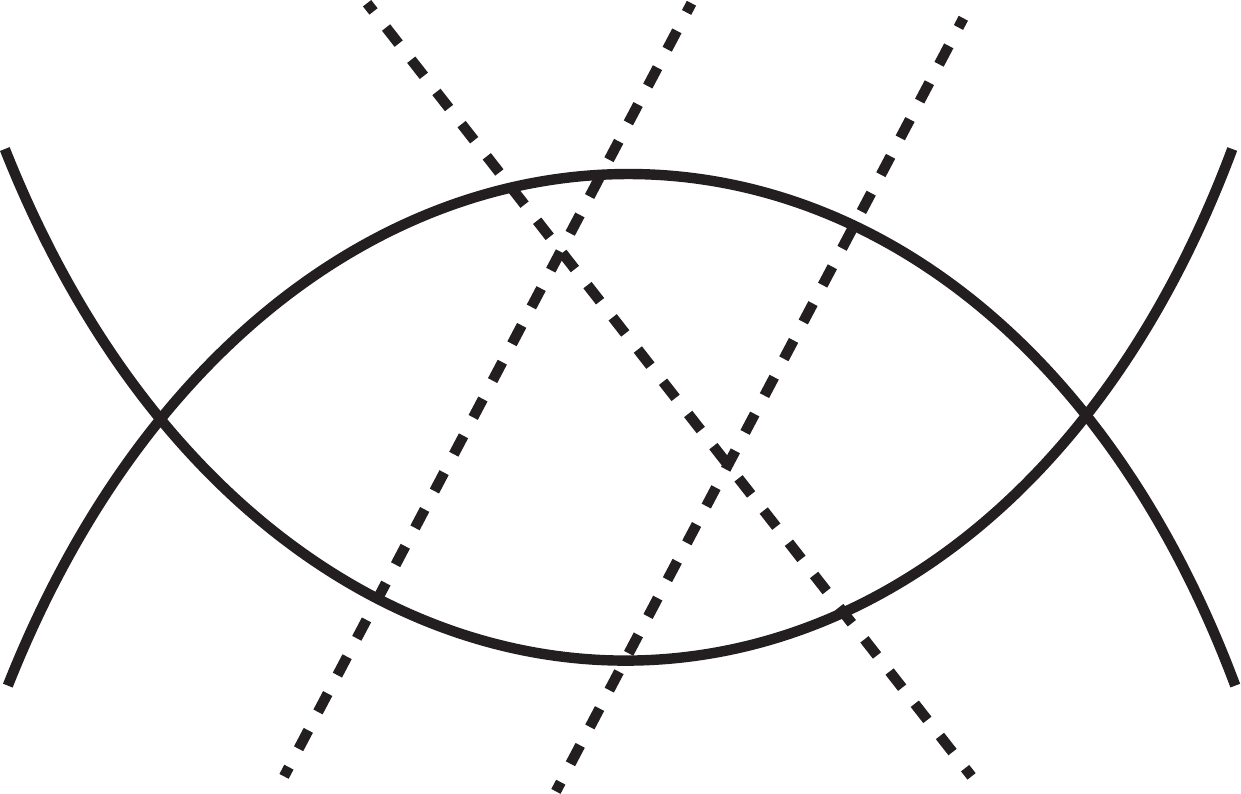}
\caption{A lune region.  The dotted curves indicate a possible parts of a given knot projection.}\label{21}
\end{figure}
\end{definition}
\begin{definition}[chord diagram]
A \emph{chord diagram} is a configuration of paired points on a circle up to ambient isotopy and reflection of the circle.      
Two points of each pair are usually connected by a simple arc, which is called a \emph{chord}.   
Every knot projection defines a chord diagram as follows (e.g., Figure~\ref{f1}).  
\begin{figure}[h!]
\includegraphics[width=8cm]{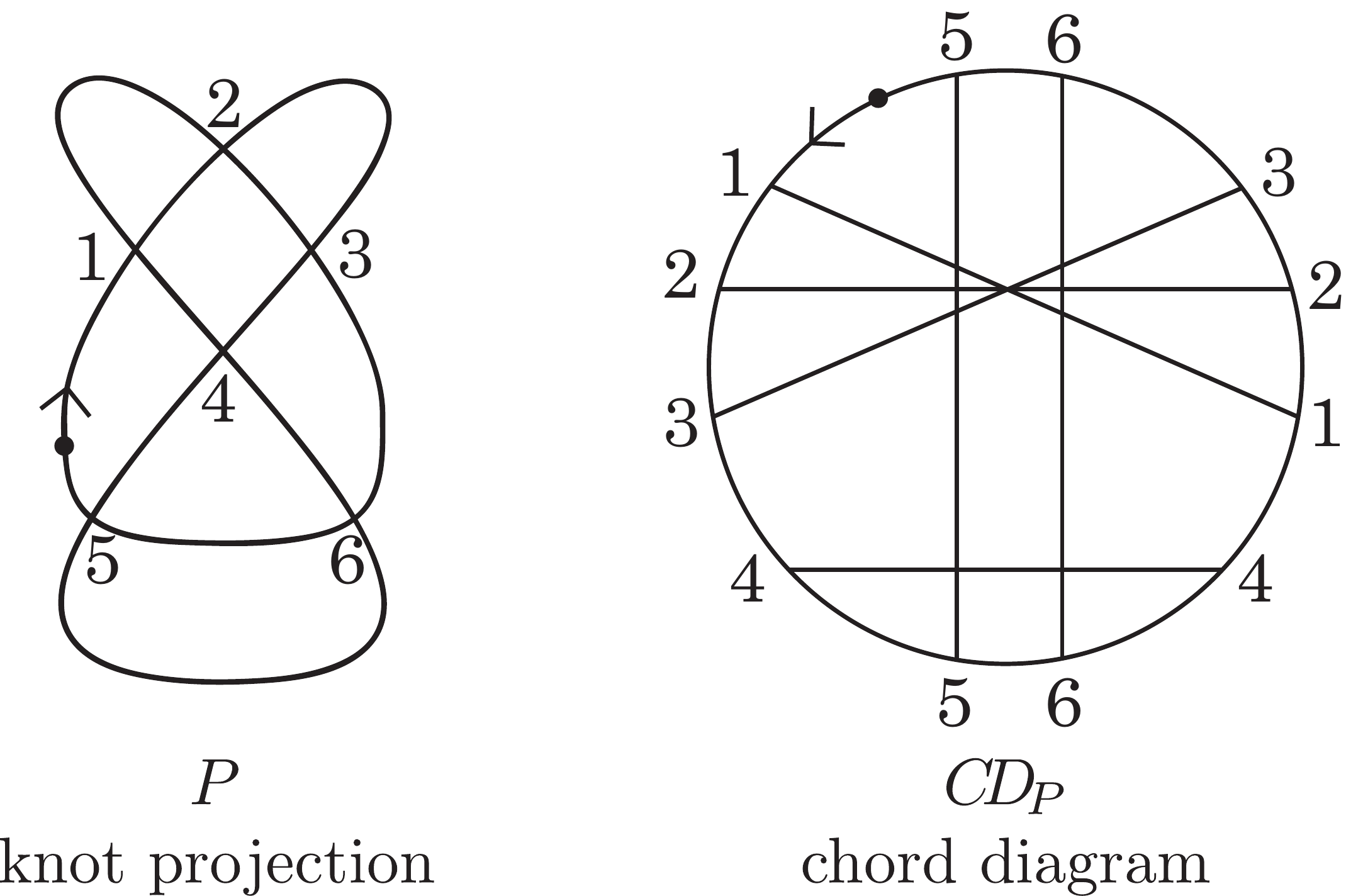}
\caption{A knot projection $P$ with a base point and an orientation (left), and a chord diagram $CD_P$ (right).}\label{f1}
\end{figure}
For a knot projection $P$, there exists a generic immersion $f :$ $S^{1} \to S^{2}$ such that $f(S^{1})=P$.   A chord diagram $CD_P$ of the knot projection $P$ is a circle with the preimage of each double point connected with a chord \cite{Ito2016}.  
\end{definition}
\begin{notation}[\x, \tr]\label{notation2}
Let $\{ p^{0}_1, p^{1}_1 \}$, $\{ p^{0}_2, p^{1}_2 \}, \ldots, \{ p^{0}_n, p^{1}_n \}$ be  paired points on a circle presenting a chord diagram.  (If the reader would like to  consider a general case, we recommend defining a chord diagram by an equivalence class of Gauss words \cite{FHIKM2018}).  
For example, when we go around the circle of a chord diagram, if four points ($p^{0}_1$, $p^{0}_2$, $p^{1}_1$, and $p^{1}_2$) appear in this order, then it is denoted by $\x$.  Similarly, when we go around the circle of a chord diagram, if six points ($p^{0}_1$, $p^{0}_2$, $p^{0}_3$, $p^{1}_1$, $p^{1}_2$, and  $p^{1}_3$) appear in this order, then it is denoted by $\tr$.      
For a knot projection $P$, the number of the sub-chord diagrams of type $\x$ ($\tr$,~resp.) embedded in $CD_P$ is denoted by $\x(P)$ ($\tr(P)$,~resp.).  In general, for a given chord diagram $x$, the function $x(P)$ is the number of the sub-chord diagrams of type $x$ embedded in $CD_P$.   (For the more formal definition of $x(P)$ obtained by Gauss words, see \cite{FHIKM2018}.)    

If $\tr(P) \neq 0$, we say that \emph{$CD_P$ contains a triple chord}.
\end{notation}
By definition, 
\begin{equation}\label{1_eq}
\{ P~|~\tr(P)=0 \} \supset \{ P~|~\x(P)=0 \}. 
\end{equation}    
\begin{definition}[$n$-gon, strong $2$-gon]
For a knot projection $P$, let $p$ be the boundary of a closure of a connected component of $S^2 \setminus P$.  Let $n$ be a positive integer.  We call $p$ an $n$-gon if, when the double points of $P$ that lie on $p$ are removed, the reminder consists of $n$ connected components, each of which is homeomorphic to an open interval.  
In particular, if a $2$-gon is globally connected, as shown by the dotted curves in Figure~\ref{f7}, it is called a \emph{strong}~$2$-gon.  
\begin{figure}[h!]
\includegraphics[width=3cm]{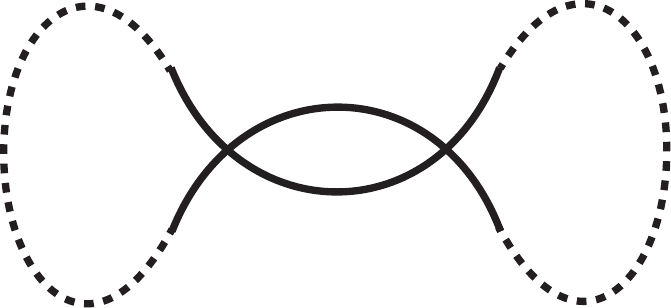}
\caption{Strong~$2$-gon.}\label{f7}
\end{figure}
\end{definition}
By definition, if a $2$-gon is the boundary $\partial D$ of a disk $D$,   no curves intersect $D \setminus \partial D$.  
\begin{notation}\label{not:1b2b}
The deformation on the left  Figure~\ref{f8} is denoted by $1b$, on the right by $s2b$.   
\end{notation}
\begin{figure}[h!]
\includegraphics[width=8cm]{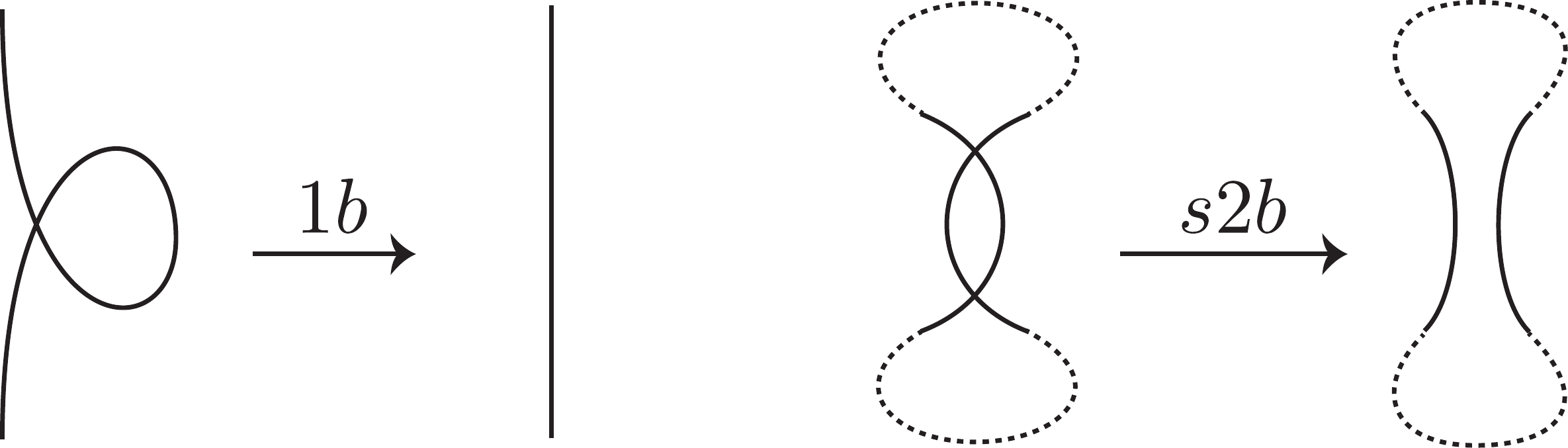}
\caption{$1b$ and $s2b$.  The dotted curves indicate connections.}\label{f8}
\end{figure}
\begin{theorem}\label{main_thm}
{\it
Let $P$ be a knot projection.  If $CD_P$ contains no triple chords, there exists a finite sequence consisting of $1b$'s and $s2b$'s, as in Notation~\ref{not:1b2b}, from $P$ to a simple closed curve.  
}
\end{theorem}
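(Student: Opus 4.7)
\medskip

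\noindent\textbf{Proof proposal.}  The plan is to induct on the number $n$ of chords in $CD_P$. The base case $n=0$ gives $P=U$ with the empty sequence. For the inductive step, it suffices to prove the following structural lemma: if $P$ has at least one double point and $\tr(P)=0$, then $P$ admits a $1$-gon or a strong $2$-gon. Once this is in hand, apply $1b$ or $s2b$ to obtain $P'$; since these moves only delete chords from $CD_P$, the chord diagram $CD_{P'}$ sits inside $CD_P$, so $\tr(P')=0$ as well, and the inductive hypothesis produces a sequence from $P'$ to $U$, which we prepend with the chosen move to finish.

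To attack the structural lemma, I would pick a chord $c\in CD_P$ minimizing $|A|$, where $A$ is the arc of $S^1\setminus\{p_c^L,p_c^R\}$ containing the smaller number of chord endpoints. If $|A|=0$, the sub-arc of $P$ parameterized by $A$ is an embedded loop at $d_c$ with no crossings and all remaining strands lying on the other side of $c$; this loop bounds a $1$-gon face in $S^2$, and we are done. Otherwise $|A|\ge 1$, and the minimality of $|A|$ forbids any chord from having both endpoints in $A$ (such a chord would bound, on its inside, a properly smaller set of endpoints). Consequently every chord with an endpoint in $A$ crosses $c$; call these chords $c_1,\ldots,c_k$, with $a_i\in A$ and $b_i\in B$. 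The hypothesis $\tr(P)=0$ means the intersection graph of $CD_P$ is triangle-free, so the $c_i$'s are pairwise non-crossing, which forces their $A$-endpoints to appear in an order along $A$ whose reverse lists the $b_i$'s along $B$ (the characteristic ``parallel'' configuration).

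The remaining task is to locate a strong $2$-gon. Among the $c_i$'s choose the one, say $c_j$, whose $A$-endpoint $a_j$ is adjacent on $S^1$ to one of $\{p_c^L,p_c^R\}$, so that the arc between them on $A$ is empty; the non-crossing/parallel structure then forces a companion arc in $B$ on the opposite side of $c\cup c_j$ to also be empty of chord endpoints. These two empty arcs together with the two chords $c,c_j$ give the candidate $2$-gon in $P$, whose boundary arcs, traversed with the orientation of $P$, go around the $2$-gon in the coherent manner that defines the ``globally connected'' (strong) condition of Figure~\ref{f7}. If the first choice of $c_j$ produces two adjacent rather than opposite empty arcs (yielding a smooth traversal through one of the vertices and hence not a strong $2$-gon), I would re-minimize: repeat the argument with $c$ replaced by a chord whose smaller arc lives inside $B$, or iterate the selection of an ``innermost'' configuration inside the $B$-side sub-diagram, using the fact that strictly fewer chords are involved to invoke the inductive hypothesis on a smaller sub-configuration.

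The hard step is precisely this last one: showing that the minimization forces the empty arcs to sit in the opposite (not merely adjacent) positions around $c\cup c_j$, so that the resulting $2$-gon is strong rather than weak. I expect the proof to reconcile this by combining three ingredients---the minimality of $|A|$, the triangle-free hypothesis forcing the $c_i$'s to be parallel, and the realizability of $CD_P$ as a planar chord diagram (each chord has an even number of crossings, in particular $k$ is even when $c$ is not isolated)---in such a way that any would-be obstruction to a strong $2$-gon at the top level produces a strictly smaller obstruction, which is removed by a secondary induction or by descending to a sub-diagram bounded by $c_j$ and applying the same analysis there.
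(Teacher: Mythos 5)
Your induction set-up, the choice of a chord $c$ minimizing the number of endpoints on one side (this is the paper's ``innermost teardrop disk''), the $|A|=0$ case giving a $1$-gon, and the observation that the no-triple-chord hypothesis forces the chords crossing $c$ into a parallel (order-reversing) family all match the paper's Step~1. The gap is in the final step, and it is not where you locate it. You treat the remaining difficulty as the strong-versus-weak distinction (``opposite rather than adjacent empty arcs''), but that part is cheap: the paper disposes of it in one line by noting that under the no-triple-chord hypothesis every $2$-gon is automatically strong. The real difficulty, which your proposal does not address, is that the region cut out by $c$ and an extremal crossing chord $c_j$ need not be a \emph{face} of $P$ at all. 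Your claim that the parallel structure ``forces a companion arc in $B$ \dots to be empty of chord endpoints'' is false as stated: that arc is empty only of endpoints of chords \emph{crossing} $c$; it may contain endpoints of chords lying entirely in $B$, i.e.\ double points of $P$ that are not on the teardrop boundary. Geometrically, the strands of $P$ re-entering the teardrop disk can intersect one another inside it, so the candidate bigon is merely a lune region with other strands running through it, and adjacency of endpoints in $CD_P$ does not certify an empty $2$-gon.

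This is precisely what the paper's Step~2 is for: it examines how the arcs $\overline{Q_\lambda Q_{\lambda+1}}$ inside the innermost teardrop disk intersect each other, and by a case analysis (exactly two interacting arcs versus at least three, with an innermost-lune argument and a triple-chord contradiction ruling out the bad intersection pattern $Q_i,x,y,Q_{i+1},x,y$) shows that some lune is genuinely empty. Your proposal defers exactly this content to an unspecified ``re-minimization'' or ``secondary induction on a sub-configuration,'' which is not an argument — in particular it is not clear what quantity decreases, nor why the inductive hypothesis (which concerns the whole theorem, not the existence of an empty lune inside a fixed region) would apply to the sub-configuration you describe. Until that step is made precise, the structural lemma, and hence the theorem, is not proved.
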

\begin{remark}
A special case of Theorem~\ref{main_thm} is given via a graph-theoretical machinery and checking 36 cases  \cite{ItoTakimura2016T}.  
\end{remark}
Here, we recall Fact~\ref{fact_circle}.  
\begin{fact}[\cite{ItoTakimura2016S}]\label{fact_circle}
Let $P$ be a knot projection.  The following two statements are mutually equivalent.  
\begin{enumerate}
\item $U$ is obtained from $P$ by a finite sequence consisting of $1b$'s, $s2b$'s, inverses of $1b$'s, and inverses of $s2b$'s.  
\item $U$ is obtained from $P$ by a finite sequence consisting of $1b$'s, $s2b$'s.  
\end{enumerate}
\end{fact}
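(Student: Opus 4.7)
The implication $(2)\Rightarrow(1)$ is immediate, since every sequence of $1b$'s and $s2b$'s is in particular a sequence of the four moves allowed in~(1). The content lies entirely in the converse $(1)\Rightarrow(2)$, and I would prove it by induction on the number $d(P)$ of double points of $P$. The base case $d(P)=0$ forces $P=U$ and the empty sequence works. For the inductive step, the key assertion is: \emph{if $P$ satisfies~(1) and $P\neq U$, then $P$ admits either an applicable $1b$ or an applicable $s2b$ move}. Granting this, apply such a move to obtain $P'$; since the move is among the four allowed in~(1), $P'$ again satisfies~(1), and as $d(P')<d(P)$ the induction hypothesis produces a sequence of $1b$'s and $s2b$'s from $P'$ to $U$; prepending the chosen move yields~(2) for $P$.

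To prove the key assertion, my plan is to exhibit a non-negative integer-valued invariant $\lambda(P)$ satisfying three properties: first, $\lambda(U)=0$; second, $\lambda$ is preserved under each of the four moves $1b$, $s2b$, $1b^{-1}$, $s2b^{-1}$; and third, if $P\neq U$ has neither a $1$-gon nor a strong $2$-gon, then $\lambda(P)>0$. Combining (1) with the first two properties gives $\lambda(P)=\lambda(U)=0$, after which the third property forces $P=U$ or the existence of an applicable $1b$ or $s2b$ move. A natural candidate for $\lambda$ is a signed linear combination of sub-chord-diagram counts in $CD_P$---for example, involving $\tr(P)$ together with counts of certain small four-chord patterns---arranged so that both inserting and deleting an isolated chord (as in $1b^{\pm 1}$) and both inserting and deleting an isolated linked pair of chords (as in $s2b^{\pm 1}$) contribute zero net change.

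The main obstacle is arranging the move-invariance for the inverse moves $1b^{-1}$ and $s2b^{-1}$: these enlarge the chord diagram, and the newly inserted chord (or chord pair) can combine with pre-existing chords in many ways to produce new sub-chord-diagrams of the kind counted by $\lambda$. The coefficients defining $\lambda$ must be tuned so that these contributions cancel uniformly across every possible insertion position of the added chords, which reduces to a finite but delicate case analysis on the placements of endpoints around $CD_P$. Once invariance is secured, the third property is verified by a structural argument: a non-empty chord diagram that contains neither an isolated chord nor an isolated linked pair must contain one of the small obstructive sub-configurations contributing positively to $\lambda$, so the only way for all the counts to vanish simultaneously is $CD_P=\emptyset$, equivalently $P=U$. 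This combinatorial step, together with the invariance calculation, is what the cited source \cite{ItoTakimura2016S} provides, and it is the substantive content of Fact~\ref{fact_circle}.
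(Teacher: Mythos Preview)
The paper does not prove Fact~\ref{fact_circle}; it is quoted verbatim as a known result from \cite{ItoTakimura2016S}, with no argument supplied.  So there is no proof in the present paper to compare against.

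That said, your proposal has a genuine gap.  The reduction of $(1)\Rightarrow(2)$ to the key assertion ``every $P$ in the equivalence class of $U$ with $P\neq U$ admits a $1b$ or $s2b$'' is correct and standard.  The problem is your mechanism for proving that assertion.  You posit an invariant $\lambda$, taken as a finite signed combination of sub-chord-diagram counts, which is invariant under all four moves and strictly positive on every irreducible $P\neq U$---but you never construct $\lambda$.  You describe the properties it ought to have and then assert that \cite{ItoTakimura2016S} supplies it.  It does not: the cited paper does not build such an invariant, and it is not at all clear one exists.  Your property~(3) demands $\lambda(P)>0$ for \emph{every} projection with no $1$-gon and no strong $2$-gon, a class containing projections of arbitrary complexity; arranging a finite linear combination of pattern counts to be positive on all of them, while simultaneously being killed by $1b^{\pm1}$ and $s2b^{\pm1}$, is a strong structural claim that you have left entirely open.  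Note too that an $s2b^{-1}$ inserts a linked pair of chords that \emph{can} cross arbitrarily many existing chords (any chord running between the two insertion arcs crosses both new chords), so the ``uniform cancellation across every insertion position'' you allude to is genuinely nontrivial and not something you can wave at.

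A more direct route---and the kind of argument one actually finds for results of this type---is to show that the set $\mathcal{S}$ of projections monotonically reducible to $U$ is closed under all four moves.  Closure under $1b^{-1}$ and $s2b^{-1}$ is trivial (prepend the inverse move).  Closure under $1b$ and $s2b$ is a confluence statement: if $P\in\mathcal{S}$ and one applies \emph{any} available $1b$ or $s2b$ to obtain $P'$, then $P'\in\mathcal{S}$.  Since the rewriting system $(1b,s2b)$ strictly decreases the number of double points, Newman's lemma reduces this to local confluence, i.e.\ checking that any two single reductions $P\to P_1$, $P\to P_2$ have a common descendant.  That is a finite case analysis on how a $1$-gon and a strong $2$-gon (or two of either) can overlap, and it is this kind of argument---not an invariant $\lambda$---that underlies the cited fact.
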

\begin{definition}\label{notation3}
Let $\mathcal{S}$ $=$ 
$\{ P~|~$ $P$ is obtained from $U$ by a finite sequence consisting of inverses of $1b$'s or $s2b$'s, as shown in Figure~\ref{f8} $\}$.  
\end{definition}
By Notation~\ref{notation2} and Definition~\ref{notation3}, known results \cite{SakamotoTaniyama2013, ItoTakimura2016S} imply that  
\begin{align}\label{1_eq4}
\{P~|~\h(P)=0\} \cap \mathcal{S} 
= \{ P~|~ &P~{\textrm{is obtained by a finite sequence consisting}}\\ &{\textrm{of $1b$'s from $P$ to}}~U \}. \nonumber 
\end{align} 
%
\begin{definition}[\cite{Polyak1998}]\label{defJ+}
Let $a_2$ be the second coefficient of the Conway polynomial.  For a knot projection $P$ with exactly $n$ double points, we obtain the possible $2^n$ knots in $\mathbb{R}^3$ by over/under informations.  Then, let $a_2 (P)$ be the average $a_2 (K)$ where $K$ varies over all $2^n$ knots.   Then, let $(J^+ + 2 St)(P)$ $=$ $8 a_2 (P)$.         
\end{definition}
\begin{corollary}\label{main1}
{\it
Let $P$ be a knot projection.  Let $(J^+ + 2 St)(P)$ be as in Definition~\ref{defJ+}, $\x(P)$ and $\tr(P)$ as in Notation~\ref{notation2}, and $\mathcal{S}$ as in  Definition~\ref{notation3}.    
Then,  
\begin{equation}\label{1_eq2}
\{P~|~ (J^+ + 2 St) (P) = 0 \} \supset {\mathcal{S}} \supset \{ P~|~\tr(P)=0 \} \supset \{ P~|~\x(P)=0 \}.  
\end{equation} 
}
\end{corollary}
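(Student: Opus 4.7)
The plan is to prove the three inclusions in (\ref{1_eq2}) separately, working from right to left. The rightmost inclusion $\{P \mid \tr(P)=0\} \supset \{P \mid \x(P)=0\}$ is equation (\ref{1_eq}), since any two of the three mutually crossing chords in a $\tr$-subdiagram themselves form an $\x$-subdiagram; hence $\x(P)=0$ forces $\tr(P)=0$. The middle inclusion $\mathcal{S} \supset \{P \mid \tr(P)=0\}$ is immediate from Theorem~\ref{main_thm}: the sequence of $1b$'s and $s2b$'s from $P$ to $U$ supplied by the theorem, read in reverse, exhibits $P$ as obtained from $U$ by inverses of those moves, which puts $P \in \mathcal{S}$ by Definition~\ref{notation3}.

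The substantive step is the leftmost inclusion $\{P \mid (J^+ + 2St)(P) = 0\} \supset \mathcal{S}$. The plan is to show that $a_2(\cdot)$ as in Definition~\ref{defJ+} is invariant under both $1b$ and $s2b$ as moves on knot projections. Once this is established, since $a_2(U)=a_2(\text{unknot})=0$ and since Fact~\ref{fact_circle} provides, for each $P \in \mathcal{S}$, a sequence of $1b$'s and $s2b$'s from $P$ back to $U$, invariance yields $(J^+ + 2St)(P) = 8 a_2(P) = 8 a_2(U) = 0$. Invariance under $1b$ is immediate: the two over/under choices at the removed kink-crossing produce knots related by a single Reidemeister~I move and hence isotopic, so the $2^n$ resolutions of $P$ pair over the $2^{n-1}$ resolutions of the $1b$-reduced projection $P'$ with equal $a_2$-values, preserving the average.

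Invariance under $s2b$ is the main obstacle. The four over/under assignments at the two crossings of the strong $2$-gon split into two pairs: the two assignments giving opposite $3$D crossing signs are Reidemeister~II-reducible to the corresponding resolution of $P'$, while the two assignments giving matching $3$D crossing signs produce a local same-sign double crossing in an otherwise empty disk (the strong $2$-gon bounds a face of $S^2\setminus P$ with empty interior and is coherently oriented with $P$). The plan is to show that this same-sign structure is also removable by ambient isotopy in $S^2 \times \mathbb{R}$ -- essentially by lifting the under-strand above the over-strand within the empty disk, a deformation that leaves the rest of $P$ fixed -- so that all four $3$D resolutions yield the same knot type as the corresponding resolution of $P'$, and the averaged $a_2$ is preserved. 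Should the direct isotopy analysis prove cumbersome, the cleaner alternative is to invoke a Gauss-diagram formula for $8 a_2(P)$ in the style of Polyak~\cite{Polyak1998}; there the $1b$- and $s2b$-invariance is transparent because the inverses of these moves introduce new chords confined to a localized arc of $CD_P$, so no new contribution to the formula arises.
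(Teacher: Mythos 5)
Your decomposition into three separate inclusions, and your handling of the two rightmost ones, match the paper's (largely implicit) derivation: $\{P \mid \tr(P)=0\} \supset \{P \mid \x(P)=0\}$ is exactly (\ref{1_eq}), and $\mathcal{S} \supset \{P \mid \tr(P)=0\}$ follows by reading the sequence from Theorem~\ref{main_thm} backwards, as you say. The paper offers no further proof of the corollary; the leftmost inclusion is taken as known from the cited literature (\cite{Polyak1998}, \cite{ItoTakimura2016S}), namely the invariance of the averaged $a_2$ under $1b$ and $s2b$. Your $1b$ argument is fine.

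The gap is in your direct argument for $s2b$-invariance. The claim that the two same-sign resolutions of the strong $2$-gon can be undone ``by lifting the under-strand above the over-strand within the empty disk'' is false: that deformation forces the two strands to pass through each other, i.e.\ it is a crossing change, not an ambient isotopy. A same-sign double crossing over an empty disk is a clasp, and a clasp genuinely changes the knot type (adding a clasp to an unknotted configuration can produce a trefoil; the Hopf link is the two-component analogue). So it is not true that all four resolutions yield the knot $K_0$ obtained from the RII-reducible resolutions. What is true --- and what the correct argument establishes --- is that the two clasped resolutions have $a_2$-values of the form $a_2(K_0)+\ell$ and $a_2(K_0)-\ell$ for the same correction term $\ell$ (a linking number supplied by the Conway skein relation), so the average over the four resolutions is still $a_2(K_0)$; the coherent orientation of the \emph{strong} $2$-gon is precisely what makes the two corrections cancel. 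Note that your isotopy argument never uses the strong hypothesis and would apply verbatim to a weak $2$-gon, under which the averaged $a_2$ is \emph{not} invariant --- a sure sign the step fails. Your fallback via a Gauss-diagram formula is a legitimate route (and closer to what the cited sources do), but the assertion that the two new chords ``contribute nothing'' is not automatic: for a strong $2$-gon those chords cross each other and sit close to the rest of the diagram, so one must check that the configurations counted by the formula involving one or both new chords cancel, rather than simply asserting that no new contribution arises.
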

\begin{remark}
 (\ref{ArnoldEq})--(\ref{1_eq2}) come together in the same story.  
\end{remark}
\section{\bf Proof of Theorem~\ref{main_thm}}\label{sec3}
Let $P$ be a knot projection having at least one double point.   
Suppose that $CD_P$ contains no triple chords.  Then, it is sufficient to show that $P$ has at least one $1$-gon or one strong~$2$-gon.   This is because it is easy to find a finite sequence consisting of $1b$'s and $s2b$'s to obtain a simple closed curve from $P$ under the condition that the chord diagram of a knot projection contains no triple chords in each step applying $1b$ or $s2b$.    
Further, in the following, we will show that $P$ has at least one $1$-gon or one $2$-gon since it is easy to see that a $2$-gon must be a strong $2$-gon under the condition that $CD_P$ contains no triple chords.  


 If we trace out $P$, then there exists a double point $C$.  Then, a simple closed curve bounds a $2$-disk $\delta$ with the boundary on $S^2$ where the curve starts and ends at $C$ $($Figure~\ref{04a}$)$.  
The disk is called a \emph{teardrop disk}  
and the double point $C$ is called a \emph{teardrop origin}.  
If a teardrop disk does not contain any other  teardrop disk, the teardrop disk is called an \emph{innermost teardrop disk}.  
Let $\delta$ be an innermost teardrop disk with the  teardrop origin $C$.    
Note that this implies that every arc in $\delta \setminus \partial \delta$ is a simple arc.  

\begin{figure}[h!]
\includegraphics[width=2cm]{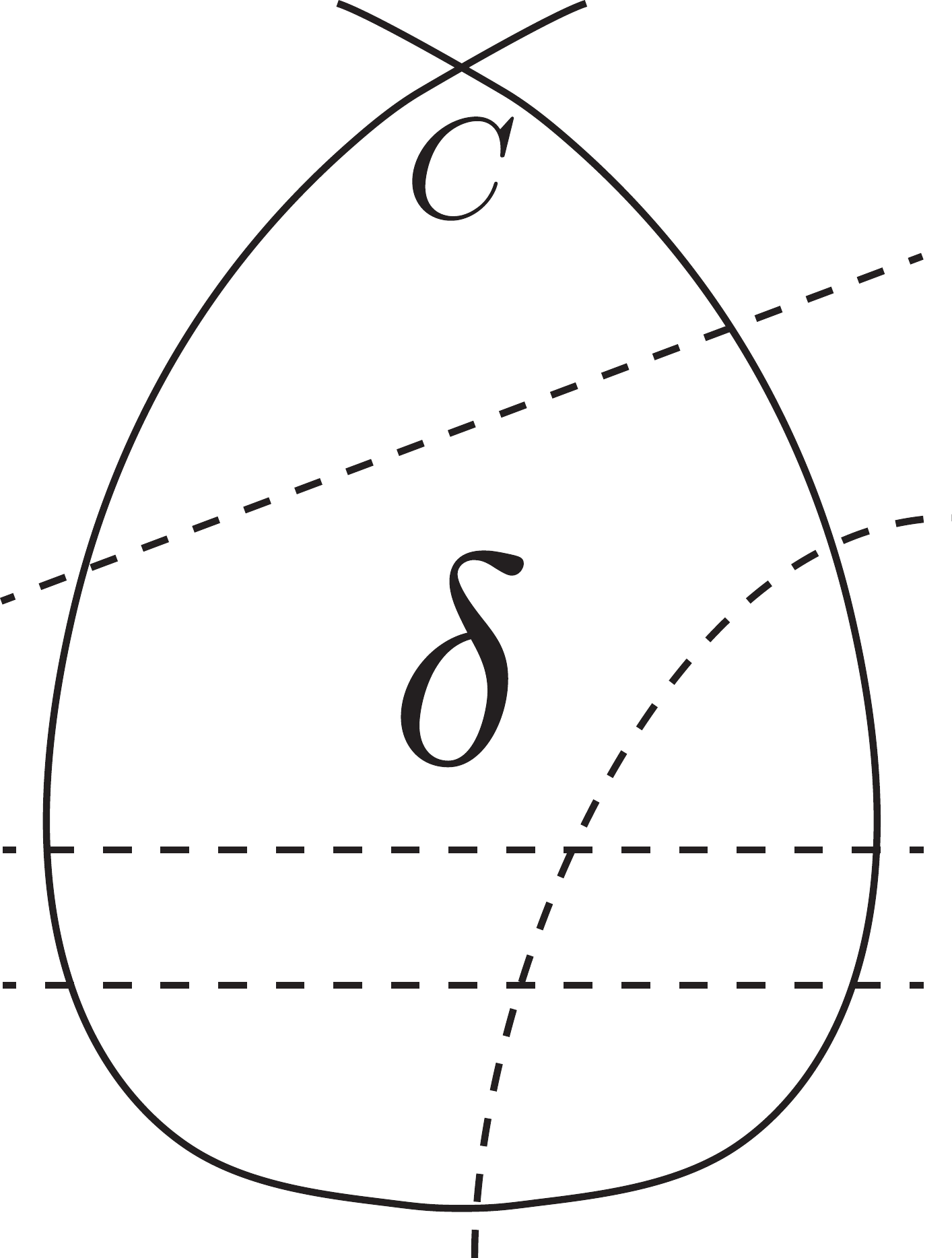}
\caption{A teardrop disk $\delta$, where the dotted curves indicate other  possible parts of a given knot projection $P$.}\label{04a}
\end{figure}

\noindent{\bf{Step~1}}.  We claim: If $CD_P$ contains no triple chords, then, for the teardrop disk $\delta$, there exist $2n+1$ double points on $\partial \delta$ and $\delta$ is of the form as shown in Figure~\ref{05}.  

It is easy to prove this claim  for the reader who is familiar with \cite{Taniyama1989}.  However, we will explain it as follows.    
\begin{figure}[h!]
\includegraphics[width=3cm]{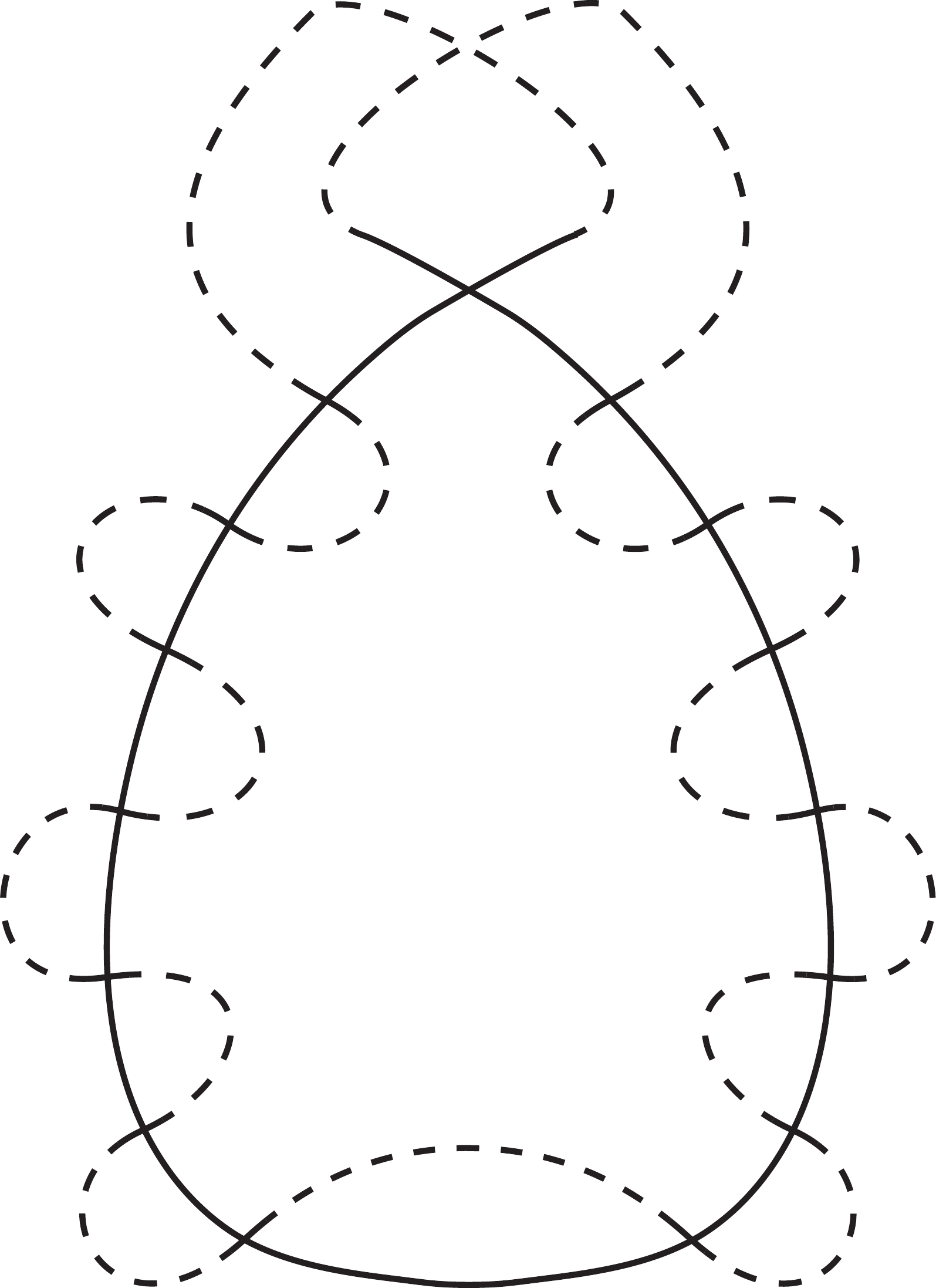}
\caption{Knot projection having $2n+1$ double points on $\partial \delta$ (this figure represents the case $n=7$).  The dotted parts indicate connections.}\label{05}
\end{figure}

We orient $P$ such that this induces a clockwise orientation on $\partial \delta$.  Using the orientation on $P$ we label the double points on $\partial \delta$ by $C$, $P_1$, $P_2$, \dots, $P_{2n}$ and on $P \setminus \partial \delta$ by $Q_1$, $Q_2$, \dots, $Q_{2n}$; see Figure~\ref{06}.  
Then, we obtain a permutation $\sigma$ of $(1, 2, \ldots, 2n)$ such that $Q_i$ $=$ $P_{\sigma(i)}$ for $i \in \{1, 2, \ldots, 2n\}$.  
For $1 \le i < j \le 2n$, we denote by $\overline{Q_i Q_j}$ the image of an sub-arc of $S^1$ which starts $Q_i$ and ends $Q_j$ with respect to the orientation of $P \setminus \partial \delta$ as the above and which has finite points of the intersection with $\partial \delta$.  See an example in Figure~\ref{06}.  

\begin{figure}[h!]
\includegraphics[width=7cm]{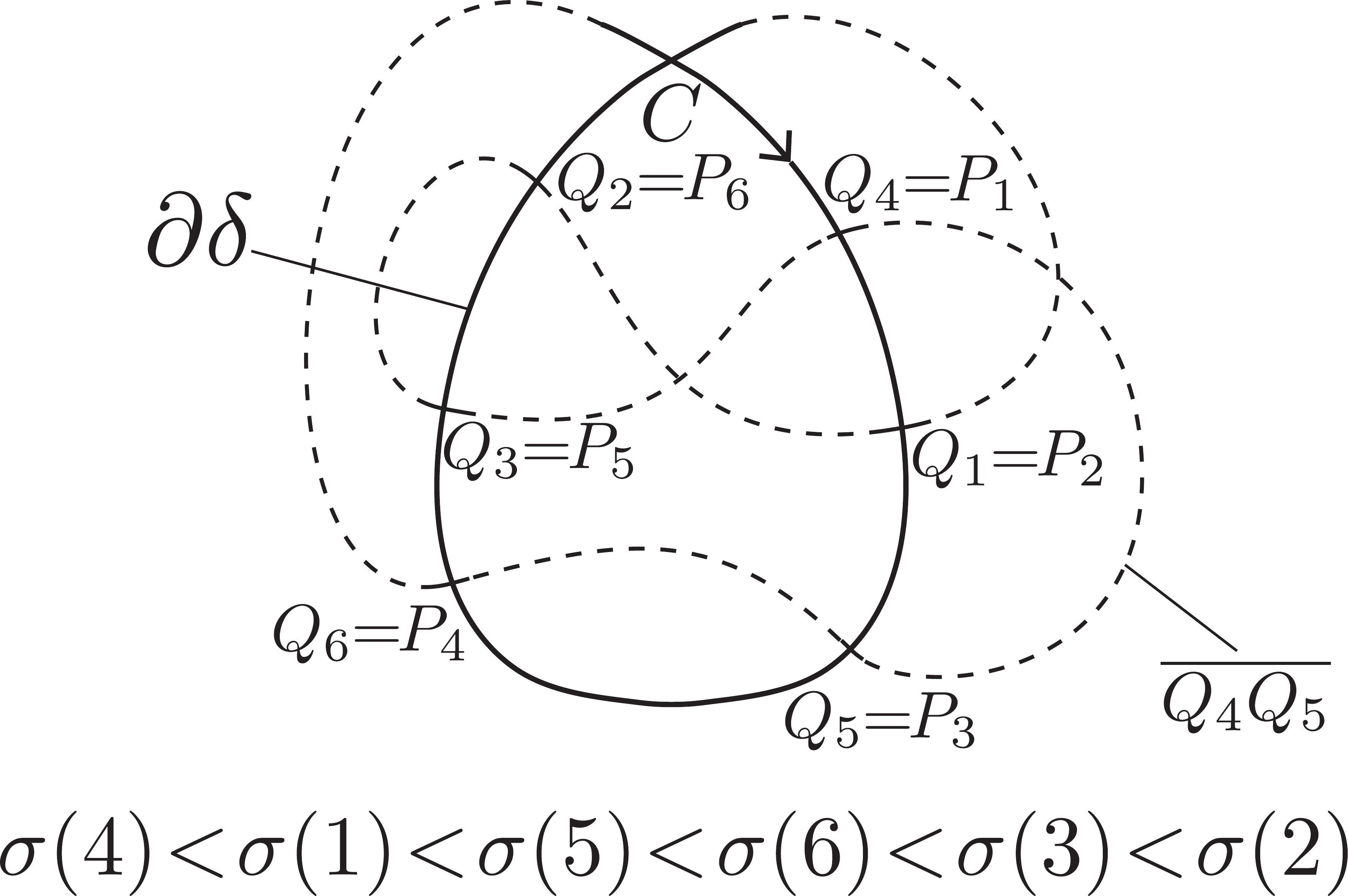}
\caption{An example of a knot projection $P$ with labels $P_i$ and $Q_i$ ($i=1, 2, \dots$).}\label{06}
\end{figure}

Under the above convention, for $P$ and for any $i$ and $j$ ($i < j$), if  $\sigma(i) < \sigma(j)$,  then $CD_P$ contains a triple chord (Figure~\ref{tear}).
\begin{figure}[h!]
\includegraphics[width=12cm]{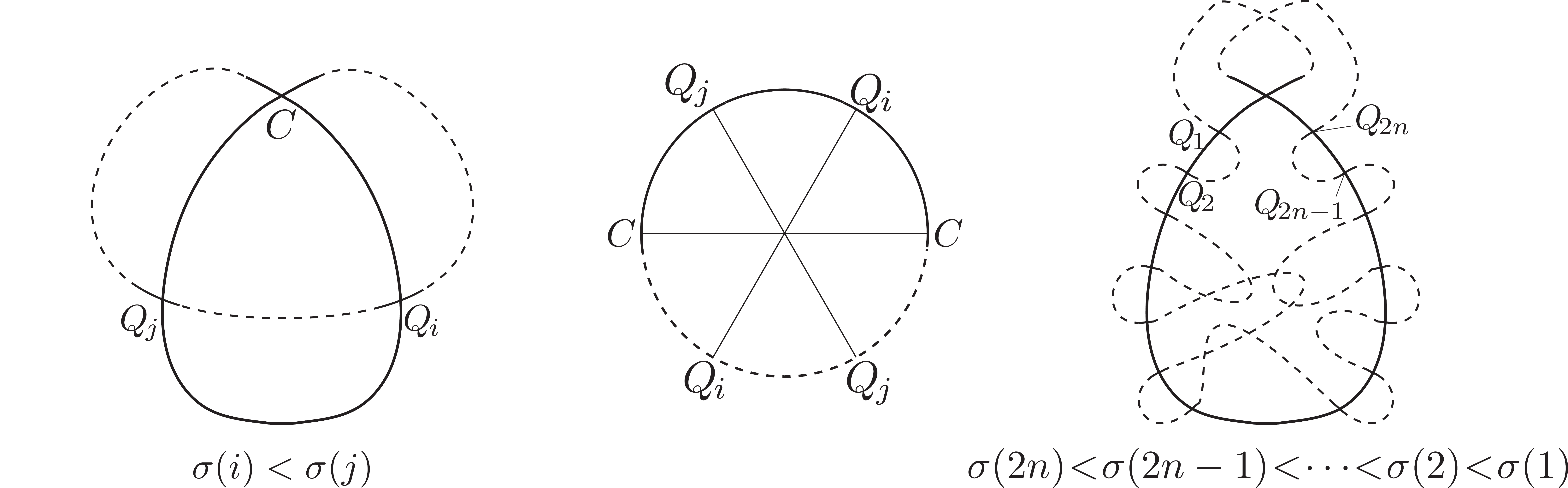}
\caption{The leftmost and the center figures indicate a knot projection and its chord diagram having {\it at least one} pair $i$ and $j$ satisfying $\sigma(i) < \sigma(j)$ ($i<j$).  If  every pair consisting of double points on $\partial \delta$ satisfies $\sigma(i) > \sigma(j)$ ($i<j$), the configuration of the double points on $\partial \delta$  is as shown in the rightmost figure.  In the rightmost figure, the dashed arcs can intersect each other.}\label{tear}
\end{figure}   
Thus, if $CD_P$ contains no triple chords (Figure~\ref{tear}), then $P$ is of the form as shown in the rightmost figure of Figure~\ref{tear}.    
\\


{\bf{Step~2}.} Let $\overline{Q_\lambda  Q_{\lambda+1}}$ ($1 \le \lambda < 2n$) be an arc in the teardrop disk $\delta$.  We note that this means that $\overline{Q_\lambda  Q_{\lambda + 1}}$ is a simple arc
\footnote{That is, for a knot projection $P$ and an generic immersion $f$ of $f(S^1)=P$, there exists an interval $I$ ($\subset S^1$) such that $f(I)$ $=$ $\overline{Q_\lambda  Q_{\lambda + 1}}$ and $f|_I$ is a bijection.} and    $\lambda$ is an odd integer.  
If $\overline{Q_k Q_{k+1} }$ does not intersect any other $\overline{Q_j Q_{j+1} }$ ($k \neq j$), then there exists the strong~$2$-gon whose boundary contains $\overline{Q_k Q_{k+1} }$ in $\delta$, as shown in Figure~\ref{case1} and we are done.  Thus we may suppose that any $\overline{Q_k Q_{k+1} }$ intersects at least one other arc $\overline{Q_j Q_{j+1} }$ ($k \neq j$).  This leaves us with the following cases:  

\begin{figure}[h!]
\includegraphics[width=3cm]{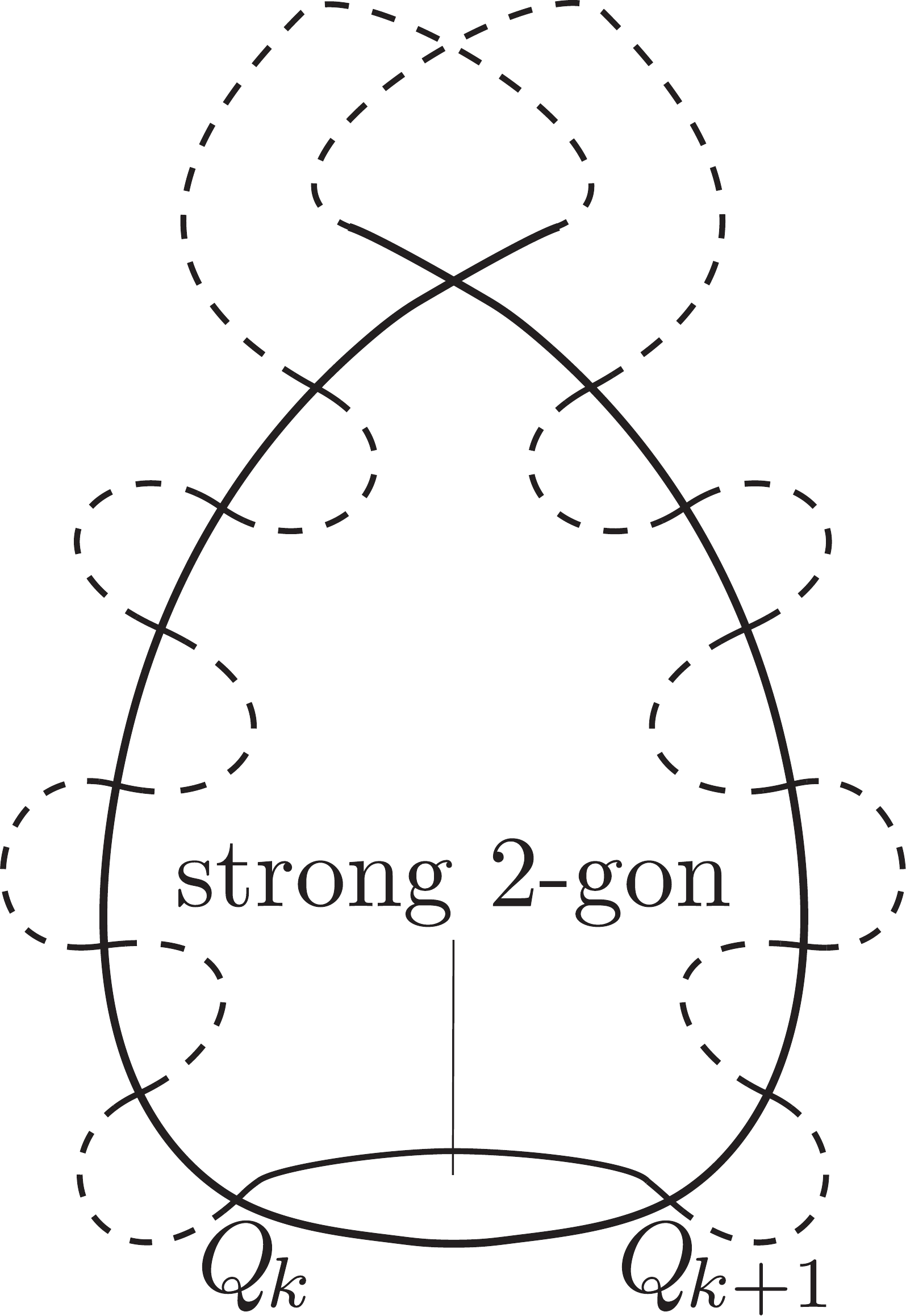}
\caption{A strong $2$-gon exists.}\label{case1}
\end{figure}

{\sf{Case~1}} (an argument on exactly two arcs): there exists $\overline{Q_j Q_{j+1} }$ such that it intersects another arc $\overline{Q_i Q_{i+1} }$ ($j \neq i$) and there is no such $\overline{Q_k Q_{k+1} }$ that intersects both $\overline{Q_j Q_{j+1} }$ and $\overline{Q_i Q_{i+1} }$ ($j \neq k$ and $i \neq k$).  

Without loss of generality, we may suppose $i < j$.  We orient the knot projection $P$ where $\partial \delta$ is oriented clockwise, and we trace $P$.  
Then the intersection $\overline{Q_j Q_{j+1}} \cap~\overline{Q_i Q_{i+1}}$ includes at least two double points, say, $x$ and $y$ in the order of  tracing $P$.    
In the other words, the part $\overline{Q_i Q_{i+1}}$ passes thorough double points in the order: $Q_i$, $x$, $y$, $Q_{i+1}$ (Figure~\ref{case2}).  
Then, if the intersections are ordered as in the center figure of Figure~\ref{case2} (i.e., the order is of type $Q_i$, $x$, $y$, $Q_{i+1}$, $x$, $y$), $CD_P$ contains the triple chord consisting of $x$, $y$ and $Q_{i+1}$.  
\begin{figure}[h!]
\includegraphics[width=10cm]{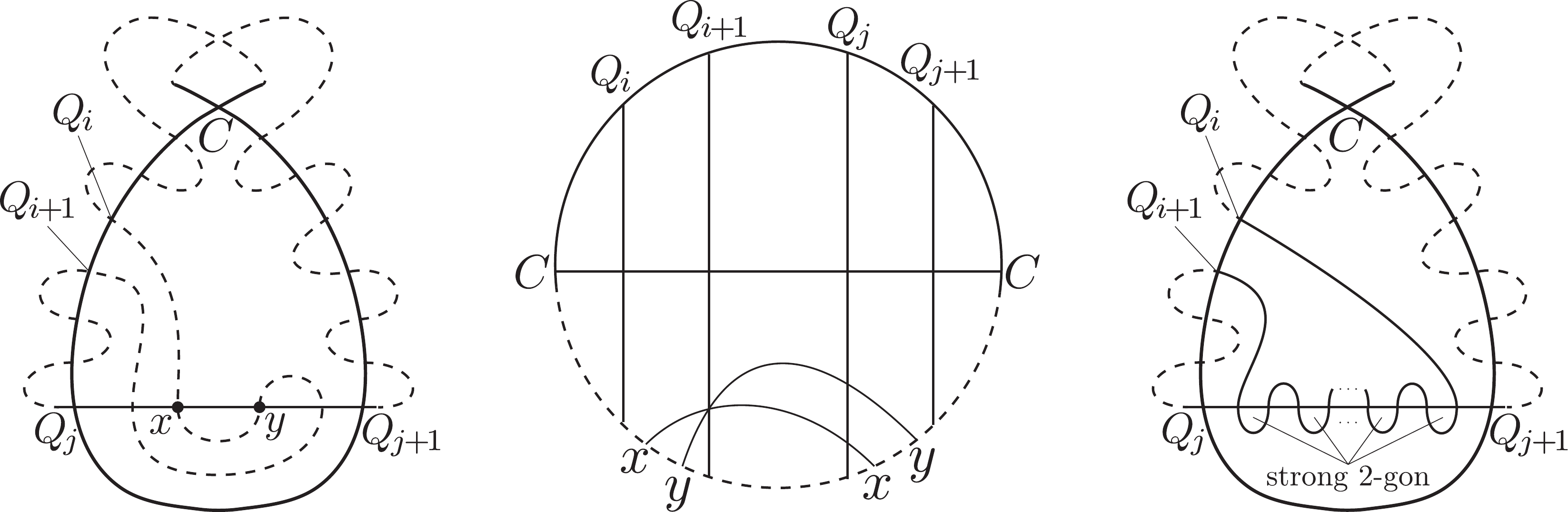}
\caption{Case~1.}\label{case2}
\end{figure}
Therefore, since every order of intersections is of type $Q_i$, $x$, $y$, $Q_{i+1}$, $y$, $x$, the intersection $\overline{Q_j Q_{j+1}} \cap \overline{Q_i Q_{i+1}}$ is as shown in the rightmost figure in Figure~\ref{case2} and $P$ has a $2$-gon.   

{\sf{Case~2}} (an argument on at least three arcs):
there exists $\overline{Q_j Q_{j+1} }$ such that it intersects another arc $\overline{Q_i Q_{i+1} }$ ($j \neq i$) and there is $\overline{Q_k Q_{k+1} }$ that intersects either $\overline{Q_j Q_{j+1} }$ or $\overline{Q_i Q_{i+1} }$ ($j \neq k$ and $i \neq k$).  
  
Note that, in Case~2, without loss of generality, we may suppose the following three conditions (e.g., see the leftmost figure of Figure~\ref{case3}).  
\begin{enumerate}
\item $1 \le i < j < k < 2n$. \\
(Every other case can be obtained by using mirror symmetries.)
\item There exists a lune region $R$ ($\subset \delta$) consisting of a part of $\delta$ and $\overline{Q_j Q_{j+1} }$.   \\
(If not, this case returns to the case as in Figure~\ref{case1}.)
\item $\overline{Q_i Q_{i+1} }$ and $\overline{Q_j Q_{j+1} }$ compose another lune region $r$ ($\subset R$).  \\
(If not, the case returns to Case~1.)
\end{enumerate}

\noindent $\ast$ Case~2-1: Suppose that $\overline{Q_k Q_{k+1}}$ does not intersect $r$.  In this case, there exists a $2$-gon as in the leftmost figure of Figure~\ref{case3}.    

\noindent $\ast$ Case~2-2: Suppose that $\overline{Q_k Q_{k+1}}$ intersects $r$.  In this case, if $\overline{Q_k Q_{k+1}}$ and $\overline{Q_j Q_{j+1}}$
 compose the innermost lune region in $r$,  this lune region gives a $2$-gon.   
If not, by contradiction, we will show that such a knot projection does not exist in the  following ($\flat$).

\noindent($\flat$) Assume that $r \setminus \partial r$ contains an arc that intersects $\partial r \setminus \overline{Q_j Q_{j+1}}$.  
Then, we orient the knot projection $P$ where $\partial \delta$ is oriented clockwise, and we trace $P$.   
When we start at $Q_{k}$, we go into $r$, and, by assumption, we intersect at $\partial r \setminus \overline{Q_j Q_{j+1}}$.  This intersection point is denoted by $x$.  
After we pass through $x$, we trace the curve, and we must intersect at $\overline{Q_{j} Q_{j+1}}$ because $r$ $\subset R$.  This second intersection is denoted by $y$.   Then, it is easy to see that $Q_k$, $x$, and $y$ on $CD_P$ compose a triple chord, as shown in Figure~\ref{case3}, which implies a contradiction.       
\label{1}

\begin{figure}[h!]
\includegraphics[width=10cm]{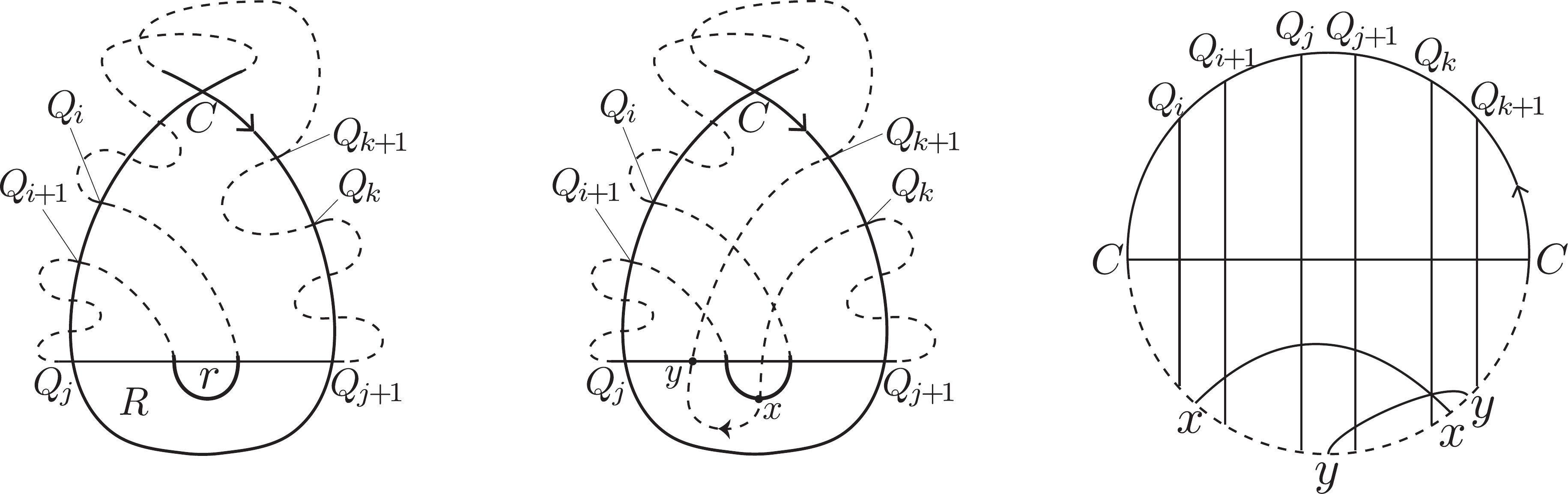}
\caption{Case~2.}\label{case3}
\end{figure}

Now Cases~1 and 2 have been checked.  
As a result, for $P$ which $CD_P$ contains no triple chord, if $n > 0$, $\delta$ is of the form satisfying Case~1 or Case~2, then $P$ has a strong~$2$-gon, and if $n=0$, $\partial \delta$ is a $1$-gon.  The proof of Theorem~\ref{main_thm} is complete.  
$\hfill\Box$

\begin{remark}
In Case~2, note that the definitions of $x$ and $y$ do not depend on their positions.   
In fact, there exist three possibilities (i), (ii), or (iii), as shown in Figure~\ref{f13}.  However, for every case, this does not depend on the definitions of intersection points $x$ and $y$, and thus, the same argument works.    
\begin{figure}[h!]
\includegraphics[width=8cm]{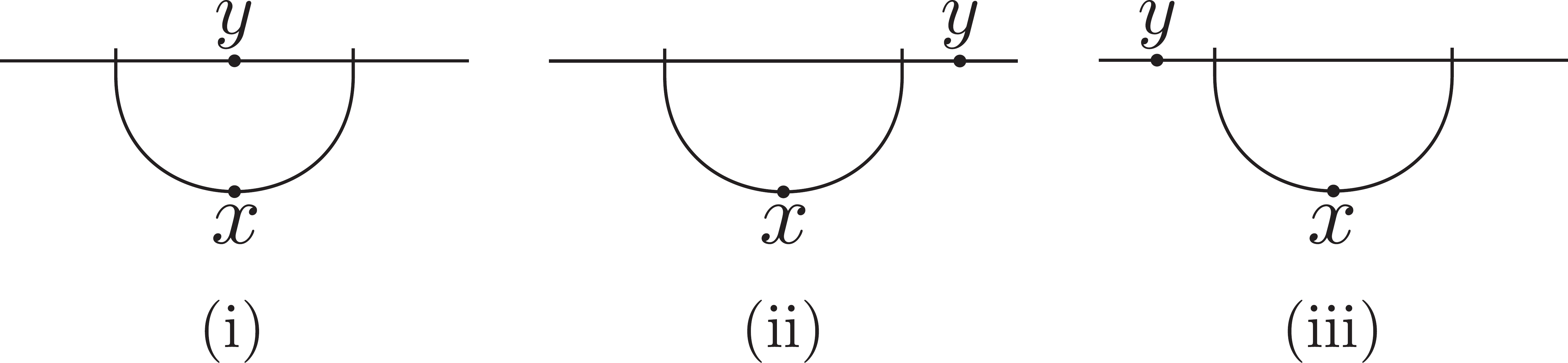}
\caption{Cases (i)--(iii).}\label{f13}
\end{figure}
\end{remark}

\section{\bf Application}\label{sec5}
A knot projection is \emph{reduced} if there is no double point whose deletion disconnects the knot projection.   
For a reduced knot projection, we have Corollary~\ref{cor2} from Theorem~\ref{main_thm}.  
\begin{corollary}\label{cor2}
{\it
Let $P$ be a reduced knot projection having at least one double point.  If $CD_P$ contains no triple chords, $P$ has at least two strong $2$-gons.    
}
\end{corollary}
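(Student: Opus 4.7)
My plan is to apply the argument used in the proof of Theorem~\ref{main_thm} to two teardrop disks lying on opposite sides of a single teardrop boundary, so as to produce two strong $2$-gons contained in disks with disjoint interiors.

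First, the hypothesis that $P$ is reduced rules out the existence of a $1$-gon: the single double point of such a $1$-gon would, upon removal, disconnect $P$. Consequently, when the proof of Theorem~\ref{main_thm} is applied to $P$, the innermost teardrop disk $\delta_1$ with origin $C$ selected there must have $n_1\ge 1$, so that Step~2 of that proof exhibits a strong $2$-gon $B_1\subseteq\delta_1$ (rather than a $1$-gon).

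To produce a second strong $2$-gon, I would consider the complementary disk $\delta_1^{\ast}:=S^2\setminus\mathrm{int}(\delta_1)$. Since $\partial\delta_1^{\ast}=\partial\delta_1$ is a simple sub-loop of $P$ starting and ending at $C$, the disk $\delta_1^{\ast}$ is itself a teardrop disk with origin $C$. I then pick an innermost teardrop disk $\delta_2\subseteq\delta_1^{\ast}$ --- taking $\delta_2=\delta_1^{\ast}$ if it is already innermost, and otherwise descending to a proper sub-teardrop whose interior contains no further teardrop. Repeating the argument of the proof of Theorem~\ref{main_thm} on $\delta_2$, the no-triple-chord hypothesis again pins down the structure of $\partial\delta_2$ as in Figure~\ref{05}, and the case analysis of Step~2 yields a strong $2$-gon $B_2\subseteq\delta_2$ (reducedness of $P$ again precluding the $1$-gon alternative). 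Since $B_1\subseteq\delta_1$ and $B_2\subseteq\delta_2\subseteq\delta_1^{\ast}$ lie in disks with disjoint interiors, $B_1\neq B_2$, completing the proof.

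The point requiring the most care is ensuring that the structural analysis of Steps~1 and~2 of the proof of Theorem~\ref{main_thm} applies cleanly to $\delta_2$, especially in degenerate cases. In particular, if $\delta_2$ itself has empty interior --- so that $\partial\delta_2$ carries only two double points (the origin and one non-origin) and $\delta_2$ is a $2$-gon face of $P$ --- then one must verify directly that $\delta_2$ is a \emph{strong} $2$-gon by inspecting the orientations of $P$ at its two corners, which are coherent because $\partial\delta_2$ is a simple sub-loop of $P$ through its origin. Treating this boundary case, in tandem with the standard Figure~\ref{05} case, is the main technical obstacle of the argument.
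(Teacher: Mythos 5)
Your overall strategy (extract two strong $2$-gons from two disks with disjoint interiors) is plausible, but the step that produces the second disk has a genuine gap. The complementary disk $\delta_1^{\ast}=S^2\setminus\mathrm{int}(\delta_1)$ is \emph{not} a teardrop disk with origin $C$: a teardrop disk is specifically the disk on the cusp side of its origin, the side that the two branches of $P$ at $C$ not belonging to $\partial\delta_1$ do \emph{not} enter (Figure~\ref{04a}). That convention is exactly what Steps~1--2 of the proof of Theorem~\ref{main_thm} rely on: it guarantees that $P\setminus\partial\delta$ starts and ends outside $\delta$, hence meets $\partial\delta$ in an even number $2n$ of points, and that every arc of $P\setminus\partial\delta$ inside $\delta$ is an arc $\overline{Q_\lambda Q_{\lambda+1}}$ ($\lambda$ odd) with both endpoints on $\partial\delta\setminus\{C\}$. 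For $\delta_1^{\ast}$ the two remaining branches at $C$ point \emph{into} the disk, so the arcs of $P$ inside $\delta_1^{\ast}$ include two arcs ending at $C$ itself, and the case analysis does not apply. Your fallback, ``pick an innermost teardrop disk $\delta_2\subseteq\delta_1^{\ast}$'', is therefore unsupported: you have given no reason why \emph{any} teardrop disk is contained in $\delta_1^{\ast}$, and this is not automatic, since the boundary of a teardrop disk of $P$ may cross $\partial\delta_1$ and then its teardrop disk lies in neither side (this already happens for the $4$-crossing projection with Gauss code $1,2,3,1,4,3,2,4$ at the double point labelled $4$). Incidentally, the ``degenerate case'' you single out as the main technical obstacle --- a teardrop disk that is a $2$-gon face carrying only two double points on its boundary --- cannot occur at all, precisely because Step~1 forces an odd number $2n+1$ of double points on the boundary of a genuine teardrop disk; worrying about it is another symptom of treating $\delta_1^{\ast}$ as a teardrop disk.

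For comparison, the paper avoids this geometric difficulty entirely and argues combinatorially: it first proves the statement for \emph{prime} projections (Lemma~\ref{lem_3}) by a doubling trick --- if a prime reduced $P$ with no triple chords had only one strong $2$-gon, then a connected sum $P\sharp P$ formed along an edge of that $2$-gon would be reduced, would still have no triple chords by Lemma~\ref{lem_2}, and would have neither a $1$-gon nor a strong $2$-gon, contradicting Theorem~\ref{main_thm} --- and then deduces the general case from the prime decomposition. If you wish to salvage your more direct approach, the missing ingredient is a proof that $\delta_1^{\ast}$ contains a teardrop disk (or, directly, a strong $2$-gon); that is essentially a new lemma, not a formality.
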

\begin{remark}
Taniyama, in a personal communication, conjectured Corollary~\ref{cor2}, and advised us  that if Corollary~\ref{cor2} is true, Theorem~\ref{main_thm} holds.   
\end{remark}
Before starting the proof, 
we need to provide Definition~\ref{primeA},  
Lemma~\ref{lem_2}, and Lemma~\ref{lem_3}.  
\begin{definition}\label{primeA}
Let $P$ be a knot projection having at least one double point.  
Then, $P$ is a $4$-regular graph that is either $4$-edge connected or $2$-edge connected.  It is \emph{prime} if it is $4$-edge connected, otherwise it can be decomposed into pieces through the deletion of two edges.  
\end{definition}
For a knot projection $P$ having at least one double point, if $P$ is not prime, by Definition~\ref{primeA}, it is presented as a ``connected sum''  of two knot projections $P_1$ and $P_2$, each of which is not $U$ (for the definition of connected sums of knot projections, see~\cite{Ito2019}).  Then, $P$ is denoted by $P_1 \sharp P_2$.   

By the argument as above, we immediately have:  
\begin{lemma}\label{lem_2}
{\it 
Let $P_i$ be a knot projection $(i=1, 2)$.  
If each $CD_{P_i}$ contains no triple chord, then $CD_{P_1 \sharp P_2}$ contains no triple chord.  
}
\end{lemma}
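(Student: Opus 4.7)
The plan is to exploit the fact that a connected sum of knot projections induces a simple block-structure on the chord diagram. Recall that $P_1 \sharp P_2$ is obtained by choosing an edge of each $P_i$, cutting both, and splicing the four endpoints to produce a single immersion. Transferring this operation to chord diagrams, one sees that $CD_{P_1 \sharp P_2}$ is built by cutting the circle of each $CD_{P_i}$ at a non-endpoint and gluing the resulting arcs into one circle. Hence the circle of $CD_{P_1 \sharp P_2}$ decomposes as a disjoint union $A_1 \sqcup A_2$ of two arcs such that every chord coming from $CD_{P_i}$ has both endpoints in $A_i$.

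The key observation is that two chords whose endpoints lie in disjoint arcs cannot interlink. Indeed, if chord $c$ has both endpoints in $A_1$ and chord $c'$ has both endpoints in $A_2$, then on the circle the four endpoints appear in the order (two endpoints of $c$, then two endpoints of $c'$), so $c$ and $c'$ do not form the $\x$ pattern.

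Now I would argue by contradiction: suppose $CD_{P_1 \sharp P_2}$ contains a triple chord, consisting of three pairwise interlinking chords $c_\alpha, c_\beta, c_\gamma$. By the observation above, no two of these can come from different $A_i$'s; otherwise they would fail to interlink. Hence all three chords must belong to the same $A_i$, and therefore come from $CD_{P_i}$. But then $CD_{P_i}$ itself would contain a triple chord, contradicting the hypothesis.

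This proof is essentially bookkeeping, and I do not anticipate a real obstacle; the only point requiring care is verifying the block-decomposition of $CD_{P_1 \sharp P_2}$, i.e., checking that the cut-and-splice operation on the knot projections transports cleanly to cutting-and-splicing on their chord diagrams so that chords are never separated. Once this is in hand, the triple-chord contradiction is immediate.
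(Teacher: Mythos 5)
Your proposal is correct and matches the paper, which simply asserts the lemma as an immediate consequence of the connected-sum construction: the Gauss word of $P_1 \sharp P_2$ is the concatenation of those of $P_1$ and $P_2$, so $CD_{P_1 \sharp P_2}$ has exactly the block structure you describe. Your write-up just makes explicit the bookkeeping (chords in disjoint arcs cannot interlink, and a triple chord is pairwise interlinked) that the paper leaves implicit.
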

\begin{lemma}\label{lem_3}
{\it
Let $P$ be a reduced knot projection having at least one double point.  If $P$ is prime and $CD_P$ contains no triple chords, then $P$ has at least two strong $2$-gons.   
}
\end{lemma}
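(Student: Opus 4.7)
The plan is to exhibit two disjoint innermost teardrop disks of $P$ in $S^2$ and to apply the Step~2 analysis of the proof of Theorem~\ref{main_thm} to each, extracting a strong $2$-gon from each. Because the two disks will lie on opposite sides of a common teardrop boundary, the resulting strong $2$-gons will be distinct. Throughout, the hypothesis that $P$ is reduced will play the role of excluding $1$-gons of $P$ whenever the Step~2 analysis would output one.

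First, since $P$ is reduced, $P$ has no $1$-gon. Next, I would fix an innermost teardrop disk $\delta \subset S^2$ with teardrop origin $C$ (existence as in the proof of Theorem~\ref{main_thm}). Running Cases~1 and~2 of Step~2 of that proof on $\delta$, together with the exclusion of $1$-gons, yields a strong $2$-gon $B_1 \subset \delta$.

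For a second strong $2$-gon, I would consider the complementary closed disk $\delta' = \overline{S^2 \setminus \delta}$. Since $\partial \delta' = \partial \delta$ is a teardrop at $C$, the disk $\delta'$ is itself a teardrop disk. Within the finite, nonempty family of teardrop disks contained in $\delta'$ (which includes $\delta'$ itself), I would pick a minimal element $\delta''$; this $\delta''$ is then an innermost teardrop disk globally, because any strictly smaller teardrop disk would automatically be contained in $\delta'$. Applying Step~2 of Theorem~\ref{main_thm}'s proof to $\delta''$, and again using reducedness to exclude the $1$-gon output, produces a strong $2$-gon $B_2 \subset \delta'' \subset \delta'$. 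The interiors of $\delta$ and $\delta'$ are disjoint open subsets of $S^2$ (their closures meet only along the $1$-dimensional set $\partial \delta$), so the faces $B_1$ and $B_2$ have disjoint interiors and are distinct.

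The main obstacle is verifying that Step~2 of the proof of Theorem~\ref{main_thm} applies verbatim to the ``outer'' innermost teardrop disk $\delta''$. This reduces to noting that the structural description in Step~1 of that proof (the boundary carries $2n+1$ double points in the pattern of Figure~\ref{05}, and every arc interior to the teardrop disk is a simple arc) depends only on two ingredients: that $\delta''$ is an innermost teardrop disk, and that $CD_P$ contains no triple chords. Both ingredients hold for $\delta''$ exactly as they did for $\delta$, so the Case~1/Case~2 dichotomy delivers the required strong $2$-gon $B_2$, completing the proof.
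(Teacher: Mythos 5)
Your overall strategy---produce one strong $2$-gon inside an innermost teardrop disk $\delta$ and a second one inside the complementary disk $\delta'=\overline{S^2\setminus\delta}$---is genuinely different from the paper's, but it has a gap precisely at the step you flag as ``the main obstacle.'' A teardrop disk in the sense of Figure~\ref{04a} and Figure~\ref{05} is the disk on the \emph{cusp} side of the embedded loop: the two branches of $P\setminus\partial\delta$ at the origin $C$ point \emph{out} of the disk, and this is exactly what makes Step~1 work, since the complementary arc then meets $\partial\delta$ in $2n$ points and dips into the disk along arcs $\overline{Q_{2k-1}Q_{2k}}$ whose endpoints are all among the $Q_i$. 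For $\delta'$ the two free branches at $C$ point \emph{into} the disk, so $\delta'$ is not a teardrop disk of the type the proof analyzes; and if one stretches the definition so that it is, then neither $\delta'$ nor a minimal teardrop disk inside it need have the Figure~\ref{05} structure (two of the interior arcs would be attached to the origin itself, and the permutation/triple-chord argument and the Case~1/Case~2 dichotomy are not established for that configuration). So your two ``ingredients'' are not sufficient: you would additionally have to prove that $\delta'$ contains at least one cusp-type teardrop disk, which is not obvious---for instance, the most natural candidate, the loop $P\setminus(\partial\delta\setminus\{C\})$ when it happens to be embedded, has its cusp-side disk bounded by arcs that pass through the interior of $\delta$, so that disk is not contained in $\delta'$. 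Until such an existence statement is supplied, the second strong $2$-gon $B_2$ has not been constructed.

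For comparison, the paper sidesteps the location question entirely. It applies Theorem~\ref{main_thm} both to $P$ and to the connected sum $P\sharp P$, which by Lemma~\ref{lem_2} still has no triple chords and, being reduced, has no $1$-gons, hence must have a strong $2$-gon. If $P$ had only one strong $2$-gon, the connected sum could be performed along an edge of that $2$-gon in each copy, destroying it in both copies and leaving $P\sharp P$ with no strong $2$-gon (Figure~\ref{19})---a contradiction. That doubling trick needs no control over where a second $2$-gon lives, which is exactly the control your argument is missing.
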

\begin{proof}
Let $P$ be a reduced knot projection having at least one double point.  Note that $P$ has no $1$-gons since $P$ is a reduced knot projection.  
By the assumption, $P$ is prime and $CD_P$ contains no triple chords.  Thus, Theorem~\ref{main_thm} implies that $P$ has at least one strong $2$-gon.  

We also note that by Lemma~\ref{lem_2}, $CD_{P \sharp P}$ contains no triple chords.  Since $P$ is a reduced knot projection, $P \sharp P$ is a reduced knot projection, which implies that $P \sharp P$ has no $1$-gons.  Thus, Theorem~\ref{main_thm} implies that $P \sharp P$ has at least one strong $2$-gon.  

Now, assume that $P$ has only one strong $2$-gon.  Then, there exists a connected sum $P \sharp P$ that has no strong $2$-gon (Figure~\ref{19}), which contradicts the above fact that $P \sharp P$ has at least one strong $2$-gon.  
\begin{figure}[htbp]
\includegraphics[width=10cm]{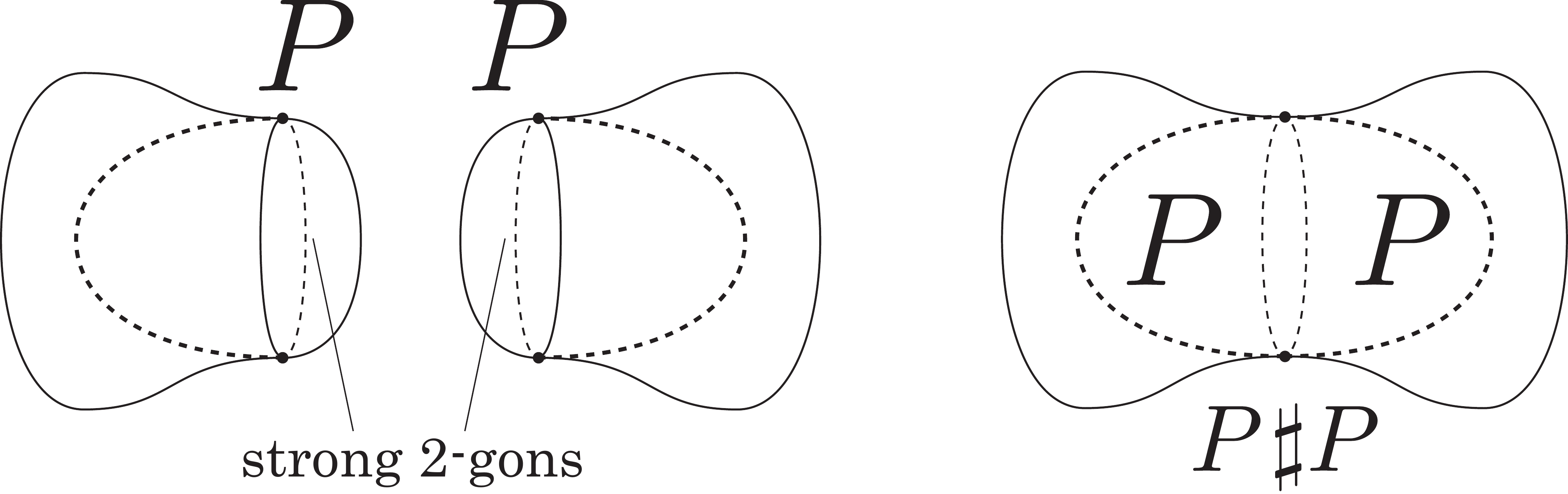}
\caption{The number of strong $2$-gons decreases by $2$.}\label{19}
\end{figure}
Therefore, $P$ has at least two strong $2$-gons.   
\end{proof}
Now, we prove Corollary~\ref{cor2}.  

\noindent{\it Proof of Corollary~\ref{cor2}.} 
It is known that for any knot projection, there exists a prime decomposition \cite{ItoTakimura2016S}, 
i.e., for any knot projection $P$, there exist a positive integer $l$ and the set of some knot projections $\{P_i\}_{i=1}^l$ such that
each $P_i$ is prime and
 \[P = (\cdots ((P_1 \sharp P_2) \sharp P_3) \cdots P_l).\]

Now, let $P'$ be a reduced knot projection having at least one double point where $CD_{P'}$ contains no triple chords.  Using the above fact, there exists a prime decomposition such that 
\begin{equation}\label{eq1}
 P' = (\cdots ((P'_1 \sharp P'_2) \sharp P'_3) \cdots P'_{l'}),  
\end{equation}
where each $P'_i$ is prime.  Note that each $P'_i$ is a reduced knot projection because $P$ is a reduced knot projection.      
Then, Lemma~\ref{lem_3} implies that each $P'_i$ has at least two strong $2$-gons.  

For any knot projection $P$, let $n_P$ be the number of strong $2$-gons.  
If a knot projection $Q_i$ satisfies $n_{Q_i} \ge 2$ ($i=1, 2$), then $n_{Q_1 \sharp Q_2} \ge 2$.   
Here, note that two strong $2$-gons cannot share an edge in a knot projection (it is easy to see the fact by using Figure~\ref{f7}).  
Therefore, (\ref{eq1}) implies that $n_{P'} \ge 2$.  Therefore, $P'$ has at least two strong $2$-gons.  
$\hfill\Box$

\section*{\bf Acknowledgement}
The authors would like to thank the referee for the comments. 

\bibliographystyle{plain}
\bibliography{Ref}
\end{document}